\newcommand{\R}{\mathbb{R}}
\newcommand{\C}{\mathbb{C}}
\newcommand{\Q}{\mathbb{Q}}
\newcommand{\Z}{\mathbb{Z}}
\renewcommand{\H}{\mathbf{H}}
\newcommand{\calO}{\mathcal{O}}
\newcommand{\frakg}{\mathfrak{g}}
\newcommand{\inj}{\hookrightarrow}
\renewcommand{\phi}{\varphi}
\newcommand{\sep}{\; | \;}
\newcommand{\la}{\langle}
\newcommand{\ra}{\rangle}
\newcommand{\bs}{\backslash}
\newcommand{\dmo}[2][]{%
 \expandafter\DeclareMathOperator\csname#2\endcsname%
  {\ifthenelse{\isempty{#1}}%
    {#2}% if #1 is empty
    {#1}% if #1 is not empty#1}%
  }%
}
\theoremstyle{plain} \newtheorem*{theorem*}{Theorem}
\theoremstyle{plain} \newtheorem*{conjecture*}{Conjecture}
\theoremstyle{plain} \newtheorem*{lemma*}{Lemma}
\theoremstyle{plain} \newtheorem*{corollary*}{Corollary}
\theoremstyle{plain} \newtheorem*{proposition*}{Proposition}
\theoremstyle{plain} \newtheorem{thm}{Theorem}[section]
\theoremstyle{plain} 
\theoremstyle{plain} \newtheorem{lemma}[thm]{Lemma}
\theoremstyle{plain} \newtheorem{corollary}[thm]{Corollary}
\theoremstyle{plain} \newtheorem{proposition}[thm]{Proposition}
\theoremstyle{definition} 
\theoremstyle{definition} 
\theoremstyle{plain} \newtheorem{thmAbs}{Theorem}
\theoremstyle{plain} \newtheorem{theoremAbs}[thmAbs]{Theorem}
\theoremstyle{plain} 
\theoremstyle{plain} \newtheorem{corollaryAbs}[thmAbs]{Corollary}
\theoremstyle{plain} \newtheorem{propositionAbs}[thmAbs]{Proposition}
\theoremstyle{definition} 
\theoremstyle{definition} 
\theoremstyle{plain} 
\theoremstyle{plain} 
\theoremstyle{plain} \newtheorem*{claim*}{Claim}
\theoremstyle{definition} \newtheorem{remark}[thm]{Remark}
\theoremstyle{definition} 
\theoremstyle{plain} 
\begin{document}

\title % running head version
{Nonarithmetic hyperbolic manifolds and trace rings}

\author{Olivier Mila}
\address{
  Universität Bern\\
  Mathematisches Institut (MAI)\\
  Alpeneggstrasse 22 \\
  3012 Bern \\
  Switzerland}

\email{olivier.mila@math.unibe.ch}

\begin{abstract}
  We give a sufficient condition on the hyperplanes used in the inbreeding construction of Belolipetsky-Thomson to obtain nonarithmetic manifolds.
  We construct explicitly infinitely many examples of such manifolds that are pairwise non-commensurable and estimate their volume.
\end{abstract}
\maketitle
%\tableofcontents

Let $M$ be a finite-volume hyperbolic $n$-manifold, with $n \geq 2$.
If $M$ is complete, it can be written as a quotient $\Gamma \bs \H^n$ for $\Gamma$ a torsion-free lattice in the semi-simple Lie group 
$\PO(n,1) \cong \Isom(\H^n)$, 
the group of isometries of the hyperbolic $n$-space $\H^n$.
The lattice $\Gamma$ is uniform if and only if $M$ is compact.

A standard way to construct hyperbolic manifolds in higher dimensions is via arithmetic lattices.
In Lie groups with rank at least 2, this is actually the
only possible construction
% \emph{only} way to construct lattices
(by Margulis' Arithmeticity Theorem);
yet it is known that there are nonarithmetic lattices in $\PO(n,1)$ for every $n \geq 2$.
Many examples in low dimensions were constructed by Vinberg using Coxeter groups (see \cite{Vinberg-non-arith-1} and the references therein), but the 
first construction in arbitrary dimension was given by Gromov and Piatetski-Shapiro \cite{GPS}.
%(later generalized by Raimbault \cite{Raimbault} and Gelander and Levit \cite{GL}).
Roughly, their idea consists in constructing two pieces of non-commensurable arithmetic manifolds with isometric boundaries and glueing them together to form a nonarithmetic manifold.
This construction has then been generalized by Raimbault \cite{Raimbault} and Gelander and Levit \cite{GL} to produce many different commensurability 
classes of nonarithmetic manifolds.

A similar construction was introduced by Belolipetsky and Thomson \cite{BT} to obtain manifolds with short systole.
They start with two hyperplanes chosen at distance $\delta > 0$ and find a torsion-free arithmetic lattice $\Gamma$ such that, in 
$M= \Gamma \bs \H^n$, the hyperplanes project down to two disjoint hypersurfaces.
Then they cut $M$ open along the hypersurfaces and glue it back to a copy of itself along its boundary;
as $\delta \to 0$, the systole of such a manifold then becomes arbitrarily small.
Manifolds obtained via this construction will be referred to as \emph{doubly-cut glueings} and the two corresponding 
hyperplanes as the \emph{cut hyperplanes} (see Section \ref{sec:nonarith}).

An interesting consequence is that infinitely many of such doubly-cut glueings are nonarithmetic and pairwise non-commensurable.
Moreover, these are the first examples in arbitrary dimension of nonarithmetic manifolds that are quasi-arithmetic 
(see \cite{Thomson}).
However if one is only interested in constructing nonarithmetic manifolds, their proof is somehow nonexplicit in the sense that it 
relies on the systole argument for proving both nonarithmeticity and pairwise non-commensurability.
Furthermore, it is hard to give an estimate on the volume of one particular nonarithmetic manifold.

In this paper we give a sufficient condition on the cut hyperplanes to obtain nonarithmetic doubly-cut glueings.
Recall that the group $\PO(n,1) \cong \Isom(\H^n)$ has a natural matrix representation in
$\OO_f(\R) \subset \GL_{n+1}(\R)$ for $f = -x_0^2 + x_1^2 + \cdots + x_n^2$ the standard Lorentzian
quadratic form (see Section \ref{sec:background}).

\begin{propositionAbs} \label{prop:main}
  Let $M$ be a doubly-cut glueing with cut hyperplanes $R_1$ and $R_2$.
  Let $\rho_1, \rho_2 \in \OO_f(\R)$ denote the reflections in $R_1,R_2$ respectively.
  If the trace of $g = \rho_1 \rho_2$ is not an algebraic integer, then $M$ is nonarithmetic.
\end{propositionAbs}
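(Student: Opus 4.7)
The plan is to argue by contradiction: I will assume $M$ is arithmetic and derive that $\tr g$ is forced to be an algebraic integer.

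First I recall what arithmeticity provides. If $\Gamma_M \leq \PO(n,1) \cong \Isom(\H^n)$ is arithmetic, then by the classification of arithmetic lattices in $\PO(n,1)$ (all of which are of simplest type when $n$ is even, and similarly well understood when $n$ is odd), there is a totally real number field $k$ such that, after a suitable conjugation in $\GL_{n+1}(\R)$, some finite index subgroup of $\Gamma_M$ lies inside $\OO_{f'}(\calO_k)$ for some $k$-rational Lorentzian form $f'$. Since conjugation in $\GL_{n+1}(\R)$ preserves trace, it follows that for every element $\gamma \in \Gamma_M$, some positive power $\gamma^m$ has trace in $\calO_k$, and so is an algebraic integer.

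The central step is to verify that some power of $g = \rho_1\rho_2$ actually lies in $\Gamma_M$. Because $R_1, R_2$ are at positive distance and hence disjoint in $\H^n$, $g$ is a hyperbolic isometry with axis $\ell$ equal to the common perpendicular of $R_1$ and $R_2$, and its eigenvalues on $\R^{n+1}$ are $\lambda,\lambda^{-1},1,\ldots,1$ for some $\lambda>1$ (with eigenvalue $1$ of multiplicity $n-1$). In the doubly-cut glueing, $M$ is obtained by cutting the original arithmetic manifold along the embedded images of $R_1$ and $R_2$ and regluing to a second isometric copy. Tracking $\ell$ through this cutting and regluing produces a closed geodesic in $M$ whose associated deck transformation in $\Gamma_M$ is of the form $g^N$ for some $N \geq 1$ depending only on the combinatorics of the gluing. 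Producing this element explicitly would require unwinding the description of $\Gamma_M$ as an amalgamated product (or HNN extension) of two copies of the original arithmetic lattice over the stabilizers of $R_1$ and $R_2$, in which the reflections $\rho_1$ and $\rho_2$ serve as the identification maps between the two copies.

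Combining the previous paragraphs, $\tr(g^{Nm})$ is an algebraic integer for some $N, m \geq 1$. From the eigenvalue expansion
\[
\tr(g^j) \;=\; \lambda^j + \lambda^{-j} + (n-1),
\]
we see that $\lambda^{Nm}$ is a root of the monic polynomial $x^2 - (\tr(g^{Nm}) - (n-1))x + 1$ with algebraic-integer coefficients, so $\lambda^{Nm}$ is an algebraic integer; then $\lambda$ is an algebraic integer (as a root of the monic polynomial $y^{Nm} - \lambda^{Nm}$), and hence $\tr g = \lambda+\lambda^{-1}+(n-1)$ is an algebraic integer, contradicting the hypothesis. The main obstacle in this plan is the middle paragraph: the arithmeticity input and the eigenvalue manipulation are standard, but pinning down an explicit $g^N \in \Gamma_M$ is exactly where the specific geometry of the doubly-cut glueing must enter, and is the only truly nontrivial ingredient.
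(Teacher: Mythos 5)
Your overall skeleton is the right one and matches the paper's: assume arithmeticity, get a power of $g=\rho_1\rho_2$ into an arithmetic group, and run the eigenvalue argument (your final paragraph is essentially the paper's: if $\alpha^{N}$ is an algebraic integer then so is $\alpha$, hence so is $\tr g$). But the step you yourself flag as ``the only truly nontrivial ingredient'' --- that some power of $g$ lies in $\Gamma_M$ --- is left as a heuristic, and it is precisely the content the proof needs. The paper supplies it as a separate lemma (Lemma \ref{lem:generating-set}): the reflections in all the boundary hypersurfaces $N_{i,j}$ of the piece $D$ induce one and the same mirror symmetry of $M$ (swapping the two copies $D$ and $D'$), so any product of two such reflections, in particular $\rho_1\rho_2$ itself, is an isometry of $M$ fixing each copy, i.e.\ lies in $\Gamma_M$ with $N=1$; equivalently, $\rho_1\rho_2$ is the holonomy of an explicit loop crossing $N_{2,1}$ exactly once. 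Your proposed route via the common perpendicular of $R_1$ and $R_2$ is not wrong in spirit (it is the Belolipetsky--Thomson systole geodesic), but as written it is an announcement of a computation, not the computation, and it quietly assumes the segment of the axis between $R_1$ and $R_2$ lies in the chosen component $C$. Without this step the proof is incomplete.

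There is a second, smaller gap in your arithmeticity input. You invoke a classification to claim that an arithmetic $\Gamma_M$ has a finite-index subgroup inside $\OO_{f'}(\calO_k)$ for some quadratic form $f'$; for $n$ odd this is false in general (there are arithmetic lattices in $\PO(n,1)$ arising from skew-Hermitian forms over quaternion algebras, and trialitarian ones for $n=7$), and the statement covers all $n\geq 4$. One can repair this with a weights argument showing that eigenvalues of arithmetic elements in any linear representation are algebraic integers, but the paper avoids the issue entirely: since $\Gamma_M$ contains the Zariski-dense subgroup $\Lambda^+\subset \OO_f'(\calO_k)$ coming from the construction, the argument of \cite[\S 1.6]{GPS} forces an arithmetic $\Gamma_M$ to be commensurable with that specific group $\OO_f'(\calO_k)$, so $g^N\in\OO_f'(\calO_k)$ for some $N$ with no classification needed. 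You should incorporate that commensurability step rather than appealing to the classification.
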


In order to study the commensurability classes of doubly-cut glueings, we use an invariant
called the \emph{adjoint trace ring}.
This invariant was introduced by Vinberg \cite{Vinberg} as the minimal ring of definition of
a lattice $\Gamma$ (see Section \ref{sec:trace-ring}).
We first show that we can realize every finitely generated subring of $\Q$ as the adjoint trace ring of a doubly-cut glueing.
\begin{theoremAbs} \label{th:Z[1/d]}
  Let $n \geq 4$. For every square-free integer $d >1$, there exists a nonarithmetic lattice
  $\Gamma_d$ in $\PO(n,1)$ with adjoint trace ring $\Z[1/d]$.
\end{theoremAbs}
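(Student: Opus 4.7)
The plan is to realize $\Gamma_d$ as an explicit doubly-cut glueing whose cut hyperplanes $R_i = v_i^\perp$ are positioned so that $\cosh \delta = f(v_1, v_2)$ is a rational with denominator a power of $d$ containing every prime factor of $d$, but no more. For $d$ odd, I would take $v_1 = (0, 1, 0, \ldots, 0) \in \R^{n+1}$ and $v_2 = \tfrac{1}{d}(d+1, d+1, d, 0, \ldots, 0)$: the identity $-(d+1)^2 + (d+1)^2 + d^2 = d^2$ gives $f(v_2, v_2) = 1$, while $f(v_1, v_2) = (d+1)/d > 1$ makes $R_1, R_2$ ultraparallel at distance $\delta$ with $\cosh \delta = (d+1)/d$ of denominator exactly $d$. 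For even $d$ one takes $v_2$ with denominator $d^2$ (or higher) in order to keep $4 \cosh^2 \delta$ out of $\Z$.

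Given such $v_1, v_2$, the BT construction of Section \ref{sec:nonarith} furnishes a torsion-free congruence subgroup $\Gamma_0 \leq \OO_f(\Z)$ such that the $\Gamma_0$-orbits of $R_1, R_2$ descend to disjoint, embedded, two-sided totally geodesic hypersurfaces $\Sigma_1, \Sigma_2$ in $M_0 = \Gamma_0 \bs \H^n$. (The relevant input is that $v_1$ and some scalar multiple of $v_2$ lie in $\Z^{n+1}$ with positive $f$-norm, so the $R_i$ carry $\Q$-arithmetic stabilizers of finite covolume.) Let $M_d$ be the doubly-cut glueing of $M_0$ along $\Sigma_1 \cup \Sigma_2$ and $\Gamma_d = \pi_1(M_d)$. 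Nonarithmeticity then follows immediately from Proposition \ref{prop:main}, since $\tr(\rho_1 \rho_2) = (n-3) + 4\cosh^2 \delta$ is rational with nontrivial denominator by construction.

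The adjoint trace ring of $\Gamma_d$ is easily seen to be contained in $\Z[1/d]$: the reflection matrices $\rho_1, \rho_2$ lie in $\OO_f(\Z[1/d])$, and $\Gamma_d$ is generated by $\Gamma_0 \subset \OO_f(\Z)$ together with elements of the form $\rho_1 g \rho_2$, $g \in \Gamma_0$, coming from the cut loops, so all of $\Gamma_d$ sits in $\OO_f(\Z[1/d])$. For the reverse inclusion I would compute $\tr \Ad(\rho_1 \rho_2) = \binom{n+1}{2} + 4(n-1)\sinh^2 \delta$ and substitute $\sinh^2 \delta = (2d+1)/d^2$: when $\gcd(2(n-1), d) = 1$, this element's trace is a rational with denominator exactly $d^2$, so already generates $\Z[1/d^2] = \Z[1/d]$ over $\Z$.

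The main obstacle is the bad primes $p$ that divide both $d$ and $2(n-1)$, for which the trace of $\rho_1 \rho_2$ alone may fail to generate the full $\Z[1/d]$. At these primes one has to either raise the denominator of $v_2$ as above, or bring in further trace contributors $\tr \Ad(\rho_1 \gamma \rho_2)$ for $\gamma \in \Gamma_0$ chosen to supply the missing powers of $1/p$; varying $\gamma$ produces a family of $\Z$-linear combinations of $1/d, 1/d^2, \ldots$ that together generate $\Z[1/d]$. The hypothesis $n \geq 4$ is used to guarantee that the restricted forms $f|_{R_i}$ have signature $(n-1, 1)$ with $n - 1 \geq 3$, so that the stabilizers of the $R_i$ in the arithmetic commensurability class are of finite covolume on each $R_i$ --- a requirement of the BT construction that fails for smaller $n$.
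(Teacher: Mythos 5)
Your overall strategy is the paper's own: build a doubly-cut glueing over $\Q$ with $R_1 = v_1^\perp$ fixed and $R_2$ the orthogonal complement of a carefully chosen second vector, get the upper bound $A(\Gamma_d)\subset\Z[1/d]$ from a generating set of the form $\la \Lambda^+,\rho_1\Lambda^+\rho_1,\rho_1\rho_2,\dots\ra$, and get the lower bound from $\tr\Ad(\rho_1\rho_2)\equiv 4(n-1)\cosh^2\delta \pmod{\Z}$. Your formulas are correct (up to the harmless integer constant in $\tr\Ad$), and your explicit $v_2=\frac1d(d+1,d+1,d,0,\dots,0)$ works whenever $\gcd(4(n-1),d)$ is squarefree-compatible with $d^2$.

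The genuine gap is exactly where you flag it: the primes $p\mid d$ with $p\mid 4(n-1)$ (including $p=2$ for even $d$). You offer two fixes but carry out neither. The second one --- harvesting extra denominators from $\tr\Ad(\rho_1\gamma\rho_2)$ for $\gamma\in\Gamma_0$ --- is unsubstantiated: you would have to show such traces actually produce the missing powers of $1/p$, and nothing in your setup guarantees this. The first fix (raising the denominator of $v_2$) is the right one, and the paper makes it precise as follows: write $4(n-1)=a\cdot p_1^{e_1}\cdots p_s^{e_s}$ with $(a,d)=1$, set $b=p_1^{e_1+1}\cdots p_s^{e_s+1}p_{s+1}\cdots p_r$, and choose an \emph{integral} $w$ with $\la w,w\ra=b$, $w_1^2>b$ and $(w_1,b)=1$; existence of such a $w$ uses that the isotropic form $-x_0^2+x_2^2+\cdots+x_n^2$ represents every integer. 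Then B\'ezout gives $\Z[4(n-1)w_1^2/b]=\Z[1/d]=\Z[2/b]$, closing both bounds simultaneously (note there is no need to normalize $v_2$ to unit norm --- the reflection in $w^\perp$ has entries in $\Z[2/\la w,w\ra]$ for integral $w$, which is what feeds the upper bound). Your proof needs this explicit choice, or an equivalent one, to be complete. Two smaller remarks: nonarithmeticity for even $d$ should be derived from $\tr\Ad(\rho_1\rho_2)\notin\Z$ (equivalently, from $A(M)\not\subset\Z$) rather than from $\tr(\rho_1\rho_2)$, since the latter can be integral even when the lattice is nonarithmetic; and the hypothesis $n\ge 4$ is not about finite covolume of the hyperplane stabilizers (rational hyperplanes always carry arithmetic stabilizers here) but about avoiding the $n=3$ ambiguity in the definition of the adjoint trace field.
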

It follows from the construction that these lattices are nonuniform (see Remark \ref{rem:nonuniform}).
Since the adjoint trace ring is an invariant of the commensurability class, an immediate corollary
is the following.
\begin{corollaryAbs}
  The lattices $\Gamma_d$ and $\Gamma_{d'}$ are non-commensurable whenever $d \neq d'$.
\end{corollaryAbs}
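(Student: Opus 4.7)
The plan is to combine Theorem \ref{th:Z[1/d]} with the commensurability invariance of the adjoint trace ring alluded to just before the corollary. First, I would assume for contradiction that $\Gamma_d$ and $\Gamma_{d'}$ are commensurable for some distinct square-free integers $d, d' > 1$. Since the adjoint trace ring is constant on a commensurability class, this assumption forces the equality of subrings $\Z[1/d] = \Z[1/d']$ inside $\Q$.

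The second step is to derive a contradiction from this equality. A prime $p \in \Z$ becomes invertible in $\Z[1/d]$ if and only if $p$ divides $d$, so the set of prime divisors of $d$ is recoverable from the subring $\Z[1/d] \subset \Q$. Since $d$ is assumed square-free and greater than $1$, it is determined by its set of prime divisors; hence $\Z[1/d] = \Z[1/d']$ implies $d = d'$, contradicting the assumption.

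The only genuine content is external to this corollary: one needs the construction producing $\Gamma_d$ with adjoint trace ring exactly $\Z[1/d]$ (Theorem \ref{th:Z[1/d]}) and Vinberg's result that the adjoint trace ring is invariant under commensurability (recalled in Section \ref{sec:trace-ring}). Granted these two inputs, the corollary reduces to the elementary observation about subrings of $\Q$ above, so I do not expect any obstacle at this stage.
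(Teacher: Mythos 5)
Your proposal is correct and is exactly the argument the paper intends: the adjoint trace ring is a commensurability invariant (Vinberg, as recalled in Section \ref{sec:trace-ring}, applicable here since the adjoint trace field of a lattice is a number field), and distinct square-free integers $d,d'>1$ give distinct subrings $\Z[1/d]\neq\Z[1/d']$ of $\Q$ because each is determined by its set of invertible primes. The paper treats this as immediate and gives no further detail, so your elaboration of the elementary subring fact is a faithful filling-in of the same route.
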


Thus we are able to construct many non-commensurable nonarithmetic doubly-cut glueings avoiding the
nonexplicit systole argument. This implies that it is easier to estimate the volume of such a
particular example.
A general proposition about volumes of doubly-cut glueings as well as an example of relatively small
volume can be found in Section \ref{sec:volumes}.

The proof mainly relies on an observation about manifolds which admit a mirror symmetry in
two of their embedded hypersurfaces.
In the first section we introduce the necessary background and prove Proposition \ref{prop:main}.
In Section \ref{sec:non-com-ex} we explain how to proceed to obtain non-commensurable manifolds and prove Theorem \ref{th:Z[1/d]}.
Section \ref{sec:volumes} is devoted to volumes computations and some generalizations of Theorem \ref{th:Z[1/d]} are
discussed in Section \ref{sec:generalizations}.

\emph{Acknowledgments:} The author expresses his gratitude towards his PhD advisor Vincent Emery for his guidance and useful comments, 
as well as to Filip Misev, Jean Raimbault and Scott Thomson for helpful discussions.

\section{Background and nonarithmeticity}
\subsection{Background}\label{sec:background}
Let $k \subset \R$ be a totally real number field with ring of integers $\calO_k \subset k$.
Let $\G$ be an absolutely simple adjoint algebraic $k$-group, and write $\G(\calO_k)$ in $\G(k) \cap \GL_N(\calO_k)$ for an 
arbitrary embedding $\G \subset \GL_N$ (this group is well defined up to commensurability).
For $n \geq 4$, we say that $\G$ is \emph{admissible} if $\G(\R) \cong \PO(n,1)$ and $\G^\sigma(\R)$ is compact for any non-trivial embedding 
$\sigma:k \inj \R$.
In that case, $\G(\calO_k)$ is a lattice in $\G(\R)$.

A lattice $\Gamma \subset \PO(n,1)$ is called \emph{arithmetic} if there exists an admissible algebraic $k$-group $\G$ and an isomorphism 
$\phi:\PO(n,1)\to \G(\R)$ such that 
\begin{equation}
  \label{eq:1}
  \phi(\Gamma) \text{ is commensurable to } \G(\calO_k).
\end{equation}
Since $\G$ is adjoint, we have $\phi(\Gamma) \subset \G(k)$ (see \cite[Prop. 1.2]{BP}).
The lattice $\Gamma$ is called \emph{quasi-arithmetic} if the condition \eqref{eq:1} is replaced by the weaker requirement that 
$\phi(\Gamma) \subset \G(k)$.

Let $n \geq 4$, and let $f$ be a quadratic form of rank $n+1$ defined over $k$.
Assume that $f$ is \emph{admissible}, i.e., that:
\begin{enumerate}
\item $f$ has signature $(n,1)$ when seen over $\R$; \label{it:quad1}
\item $f^\sigma$ is positive definite for any non-trivial embedding $\sigma:k \inj \R$.\label{it:quad2}
\end{enumerate}
Condition \ref{it:quad1} implies the existence of an $\R$-equivalence
$f \cong -x_0^2 + x_1^2 + \cdots + x_n^2$.
Therefore we may identify the hyperbolic space $\H^n$ with the ``$f$-hyperbolic space'' 
\[
\H_f = \{x = (x_0, \dots, x_n) \in \R^{n+1} \; \sep\; f(x) = -1 \quad \text{and} \quad x_0 > 0\}.
\]
Let $\OO_f$ denote the algebraic $k$-group of $f$-orthogonal matrices and $\PO_f = \OO_f / \{\pm \id\}$ the associated 
projective orthogonal group.
Conditions \ref{it:quad1} and \ref{it:quad2} ensure that $\PO_f$ is an admissible algebraic group; the corresponding arithmetic groups are called 
\emph{of the first type}.

Instead of directly working in $\PO_f(\R)$, it is more convenient for our purposes to use the Lie group 
\[
\OO_f'(\R) = \{ g =(g_{i,j})_{0 \leq i,j \leq n} \in \GL_{n+1}(\R) \sep f \circ g = f \quad \text{and} \quad g_{0,0} > 0\}.
\]
This group acts on $\H_f$ and may be identified with the group $\Isom(\H^n)$ of isometries of the hyperbolic $n$-space.
Moreover there is an obvious isomorphism $\OO_f'(\R) \cong \PO_f(\R)$ which allows us to see elements of $\PO_f(\R)$ as matrices.
Since it is a matrix group, we can define $\OO_f'(A)$ for any ring or field $A \subset \R$ as
$\OO_f'(\R) \cap \GL_{n+1}(A)$.
In particular, the group $\OO_f'(\calO_k)$ is unambiguously defined and is an arithmetic subgroup of the first type.

With these identifications, a hyperplane of $\H_f$ is simply $\H_f$ intersected with a linear subspace of $\R^{n+1}$ of dimension $n$.
We will say that such a hyperplane is \emph{$k$-rational} (or more briefly \emph{rational}) if the corresponding subspace in $\R^{n+1}$ 
is the $f$-orthogonal complement of a vector $v$ in $k^{n+1}$ (or equivalently in ${\calO_k}^{n+1}$).
Observe that the reflection in such a hyperplane is an isometry of $\H_f$ which lies in $\OO_f'(k)$.

\subsection{Nonarithmeticity} \label{sec:nonarith}
We briefly recall the construction of \cite{BT} in order to prove Proposition \ref{prop:main}.
Let $f/k$ be an admissible quadratic form over a totally real number field $k$.
Let $R_1, R_2$ be two disjoint rational hyperplanes in $\H_f$ and let $\Lambda \subset \OO_f'(\calO_k)$ be a finite-index torsion-free subgroup 
such that:
\begin{enumerate}
\item for $i = 1,2$ and for any $\lambda \in \Lambda$, $\lambda R_i$ is either disjoint from $R_i$ or coincides with it;
\item for any $\lambda  \in \Lambda$, $\lambda R_1$ is disjoint from $R_2$.
\end{enumerate}
In that case, for $i = 1,2$ the orbit $\Lambda R_i = \{\lambda  R_i \sep \lambda \in \Lambda\}$ forms a collection of disjoint hyperplanes, 
and $\Lambda R_1 \cap \Lambda R_2 = \emptyset$.
It follows from \cite[Lemma 3.1]{BT} that such a subgroup $\Lambda \subset \OO_f'(\calO_k)$ always exists.

Let $L = \Lambda \bs \H_f$.
By construction, the two hyperplanes $R_1$ and $R_2$ project down in $L$ to two disjoint \emph{hypersurfaces} (by which we mean 
totally geodesic codimension one embedded submanifolds); we denote them by $N_1$ and $N_2$ respectively.
Choose a connected component $C$ of $L \setminus (N_1 \cup N_2)$ and denote by $M$ the manifold obtained by glueing two copies of $C$
to each other by identifying their boundaries (that is, $M$ is the ``double'' of $C$).
We will call $M$ a \emph{doubly-cut glueing}, and $R_1$ and $R_2$ the \emph{cut hyperplanes}.

Write $\Lambda^+$ for the image of $\pi_1(C)$ in $\Lambda$ via the isomorphism $\pi_1(L) \cong \Lambda$.
Let $D, D'$ denote the two copies of $C$, seen as submanifolds of $M$.
Since $M$ is complete, we may write it as $M = \Gamma \bs \H_f$ with $\Gamma \subset \OO_f'(\R)$.
Such a $\Gamma$ can be chosen to contain $\Lambda^+$ in such a way that the following diagram commutes:
\[
\begin{tikzcd}
  \Lambda^+ \rar[hook] & \Lambda\\
  \pi_1(D) \rar[hook] \uar["\cong"] & \pi_1(M) \uar["\cong"].
\end{tikzcd}
\]

We will use the following lemma, which gives a generating set for $\Gamma$.
\begin{lemma} \label{lem:generating-set}
  Let $\rho_1, \rho_2 \in \OO_f'(k)$ denote the reflections in $R_1, R_2$ respectively.
  There exists $\lambda_1, \lambda_2 \in \Lambda$ such that 
  \[
  \Gamma = \la \;  \Lambda^+ , \;\rho_1 \Lambda^+ \rho_1 ,\;
  \rho_1 \rho_2, \; \rho_1 \lambda_1 \rho_1 \lambda_1^{-1},\; \rho_1 \lambda_2 \rho_2 \lambda_2^{-1} \; \ra.
  \]
  In particular, $\Gamma \subset  \la \;\OO_f'(\calO_k), \rho_1, \rho_2 \;\ra$. 
\end{lemma}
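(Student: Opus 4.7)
The plan is to identify $\Gamma = \pi_1(M)$ by realizing it as the index-2 orientation-preserving subgroup of a larger group $\Gamma^*$ that also contains the reflections in the boundary hyperplanes of a lift of $C$, and then reading off the Schreier generators for this index-2 inclusion.

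First, I fix a lift $\tilde{C} \subset \H_f$ of $C$ along $\H_f \to L$. After replacing $\tilde{C}$ by a suitable $\Lambda$-translate, I may assume $R_1, R_2 \in \partial \tilde{C}$. The stabilizer of $\tilde{C}$ in $\Lambda$ is $\Lambda^+$; the boundary hyperplanes of $\tilde{C}$ lie in $\Lambda R_1 \cup \Lambda R_2$, and their $\Lambda^+$-orbits are in bijection with the connected components of $\partial C$. I choose $\lambda_1, \lambda_2 \in \Lambda$ so that $\lambda_1 R_1, \lambda_2 R_2 \in \partial \tilde{C}$ represent the remaining $\Lambda^+$-orbits (the lemma covers the generic case of four boundary components, one per $\Lambda^+$-orbit).

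Second, I describe $\Gamma^*$. The double $M$ carries an orientation-reversing $\Z/2$-symmetry swapping the two copies of $C$, whose quotient is the orbifold $O$ obtained from $C$ by placing a mirror along $\partial C$. Thus $\Gamma$ sits as the kernel of the orientation character on $\Gamma^* := \pi_1^{\mathrm{orb}}(O)$, an index-2 subgroup. Now $\Gamma^*$ acts on $\H_f$ with fundamental domain a copy of $C$ (realized as a fundamental domain for $\Lambda^+$ acting freely on $\tilde{C}$) whose boundary walls are mirrors; by the orbifold Poincaré polyhedron theorem,
\[
\Gamma^* = \la \Lambda^+, \; \rho_1, \; \rho_2, \; \lambda_1 \rho_1 \lambda_1^{-1}, \; \lambda_2 \rho_2 \lambda_2^{-1} \ra,
\]
the generators being $\Lambda^+$ (from the interior side-pairings inside $\tilde{C}$) together with the reflection in one boundary hyperplane per $\Lambda^+$-orbit.

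Third, I extract $\Gamma$ using the standard Schreier generators for the index-2 inclusion $\Gamma \subset \Gamma^*$ with transversal $\{1, \rho_1\}$. A direct Schreier computation yields $\Lambda^+$ and $\rho_1 \Lambda^+ \rho_1$ (one from each transversal element acting on the orientation-preserving generator $\Lambda^+$), together with $\rho_1 \rho_2$, $\rho_1 \lambda_1 \rho_1 \lambda_1^{-1}$ and $\rho_1 \lambda_2 \rho_2 \lambda_2^{-1}$ (obtained by composing $\rho_1$ with each of the other three reflection generators; the Schreier contributions involving only $\rho_1$ itself are trivial, and those from the coset $1$ are inverses of the listed generators). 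This gives the asserted generating set, and the inclusion $\Gamma \subset \la \OO_f'(\calO_k), \rho_1, \rho_2 \ra$ is then immediate from $\Lambda \subset \OO_f'(\calO_k)$. The main technical subtlety I foresee is justifying the description of $\Gamma^*$: one must verify that a fundamental domain for $\Lambda^+$ inside $\tilde{C}$, with mirror boundary, really is a fundamental polyhedron for $\Gamma^*$ in the orbifold Poincaré sense. This is where the pairwise disjointness of the hyperplanes in $\Lambda R_1 \cup \Lambda R_2$ matters, since it means no dihedral-angle conditions at intersections need to be checked.
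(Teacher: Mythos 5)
Your argument is correct and arrives at the stated generating set, but it takes a genuinely different route from the paper's. The paper works directly on $M$: it observes that the reflections in the boundary hypersurfaces $N_{i,j}$ of $D$ all descend to the \emph{same} involution of $M$ (the swap of $D$ and $D'$), so that any product of two of them lies in $\Gamma$; it then identifies $\rho_1\rho_2$ and $\rho_1\lambda_i\rho_i\lambda_i^{-1}$ with the classes of loops crossing $N_{2,1}$, resp.\ $N_{i,2}$, exactly once, and uses the Seifert--van Kampen decomposition of $M$ along the $N_{i,j}$ to conclude that $\pi_1(M)$ is generated by $\pi_1(D)$, $\pi_1(D')$ and these crossing loops. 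You instead package the same mirror symmetry into the reflection orbifold $O$ and run Reidemeister--Schreier for the index-$2$ inclusion $\Gamma\subset\Gamma^*=\pi_1^{\mathrm{orb}}(O)$. Your approach buys a full presentation of $\Gamma^*$ (hence of $\Gamma$) rather than just generators and makes the extraction of the generating set mechanical; its cost is that you must justify that $\Gamma^*$ is discrete with the claimed fundamental polyhedron (Poincar\'e's theorem, unproblematic here precisely because the walls are pairwise disjoint, as you note) and that the index-$2$ subgroup you extract is literally $\pi_1(M)$ for the double $M$ of the statement. Two small points to tighten: the relevant character on $\Gamma^*$ is the mirror-crossing character (sending $\Lambda^+$ to $0$ and each wall reflection to $1$), not the orientation character --- $\Lambda^+$ need not be orientation-preserving, and it is the unfolding of the mirrors, not orientability, that produces $M$; and a fundamental domain for $\Lambda^+$ acting on $\tilde C$ may meet several walls in each $\Lambda^+$-orbit, so Poincar\'e's theorem a priori yields $\Lambda^+$-conjugates of your four chosen reflections as generators --- a harmless but necessary reduction to record.
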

\begin{proof}
  Topologically, $M$ consists of $D$ and $D'$ glued together along their boundaries.
  Now depending on whether $N_1$ (resp. $N_2$) separates $L$ or not, 
  $\partial D$ consists of one or two copies of $N_1$ (resp. $N_2$).
  Write $\partial D = N_{1,1} \cup N_{1,2} \cup N_{2,1} \cup N_{2,2}$ with $N_{i,j} \cong N_i$ and possibly $N_{i,1} = N_{i,2}$.
  We can assume that the hyperplane $R_i$ is a lift of $N_{i,1}$.
  Moreover since the boundary components of $C$ corresponding to $N_{i,1}$ and $N_{i,2}$ are identified in $L$, 
  we can find a (possibly trivial) $\lambda_i \in \Lambda$ such that $\lambda_i R_i$ is a lift of $N_{i,2}$.
\vspace{-1em}
\begin{figure}[h!]\centering
\def\svgwidth{250pt}
\let\oldsmash\smash
\renewcommand{\smash}{\tiny \oldsmash}
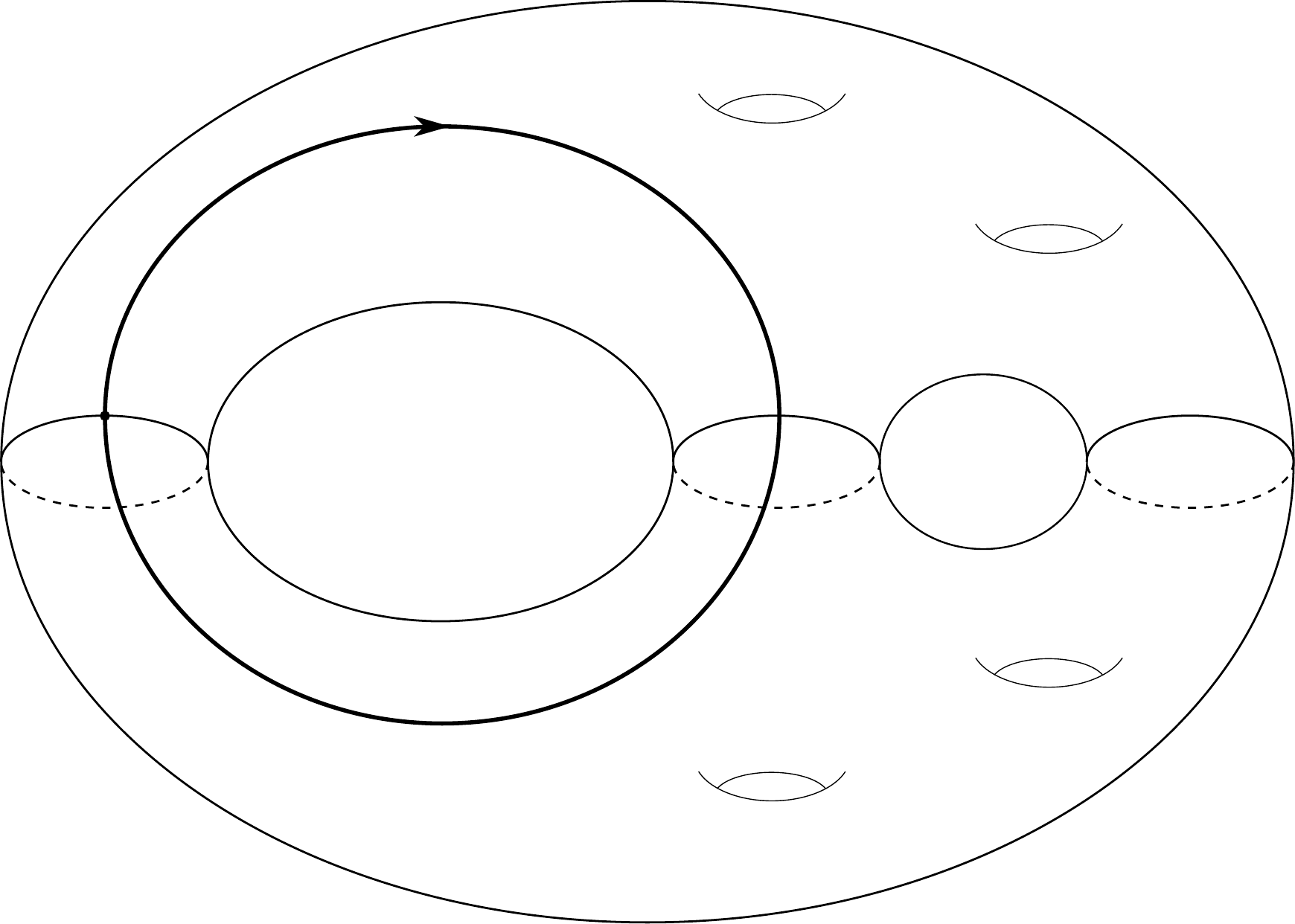
\caption{A doubly-cut glueing $M$.}
\end{figure}

  Observe that by construction the reflections in the $N_{i,j}$ are isometries of $M$, and that
  all these reflections have the
  same effect on $M$: they exchange $D$ and $D'$.
  From this it follows that for $i = 1,2$ and $\lambda \in \{1, \lambda_i\}$, the elements $\rho_1$ and $\lambda \rho_i \lambda^{-1}$ 
  induce the same isometry on $M$.
  Therefore 
  \[
  \rho_1 \lambda \rho_i \lambda^{-1} \in \Gamma.
  \]
  Now pick a base point $x_0$ in $N_{1,1}$.
  It is easy to see that $\rho_1 \rho_2 $ corresponds to the homotopy class
  $[\eta_{2,1}] \in \pi_1(M, x_0)$ of a loop $\eta_{2,1}$ at $x_0$ which crosses $N_{2,1}$ exactly once
  and is contained in $D$ before this crossing and in $D'$ after this crossing.

  Similarly, for $i=1,2$ the element $\rho_1 \lambda_i \rho_i \lambda_i^{-1}$ corresponds to the class of
  a loop $\eta_{i,2}$ crossing $N_{i,2}$ exactly once.
  Consequently, the group $\pi_1(M, x_0)$ is generated by 
  \[
  \pi_1(D) \cup \pi_1(D') \cup \{[\eta_{2,1}], [\eta_{1,2}] ,  [\eta_{2,2}]\}.
  \]
  Since $\pi_1(D')$ corresponds to $\rho_1 \Lambda^+ \rho_1$ in $\Gamma$, the lemma follows.
\end{proof}

\begin{remark}
  It is an easy consequence of the lemma that all doubly-cut glueings are quasi-arithmetic.
  This fact is also proven in \cite{Thomson}.
\end{remark}
  
We turn towards the proof of Proposition \ref{prop:main}.
\begin{proof}[Proof of Proposition \ref{prop:main}]
  We prove the contrapositive. Assume $M$ is arithmetic.
  Since $\Gamma$ and $\Lambda$ share the Zariski-dense subgroup $\Lambda^+$, it follows from 
  \cite[§1.6]{GPS} that $\Gamma \cap \OO_f'(\calO_k)$ is a finite index subgroup of $\Gamma$.
  Therefore the element $g = \rho_1 \rho_2 \in \Gamma$ must have a power $g^N$ in $\OO_f'(\calO_k)$.
  Let $\alpha$ be an eigenvalue of $g$.
  Then $\alpha^N$ is an eigenvalue of $g^N$, and is thus an algebraic integer (since it lies in an
  integral extension of $\calO_k$).
  It is easy to see that the same is true about $\alpha$.
  Since $\alpha$ is arbitrary, we get that $\tr(g) = \sum ($eigenvalues$)$ is also an algebraic
  integer.
\end{proof}

\begin{remark}\label{rem:trAd}
  The proof actually only uses the fact that $\rho_1\rho_2$ has an eigenvalue which is not an
  algebraic integer.
  Therefore the conclusion of the proposition still holds if the trace condition is replaced
  by this requirement.
\end{remark}

\section{Non-commensurability and examples over $\Q$} \label{sec:non-com-ex}
\subsection{The adjoint trace ring} \label{sec:trace-ring}
Proposition \ref{prop:main} gives a way to control the
nonarithmeticity of doubly-cut glueings, but does not say anything
about their commensurability.  To that end, we will use an invariant
introduced by Vinberg called the adjoint trace ring \cite{Vinberg}.

Let $\Gamma$ be a Zariski-dense subgroup of a semisimple algebraic
group $\G$.  The \emph{adjoint trace field} of $\Gamma$ is the field
\[
K(\Gamma) = \Q\left(\{\tr \Ad(\gamma) \sep \gamma \in
  \Gamma\}\right)
\]
where $\Ad$ denotes the adjoint representation.  Similarly, the
\emph{adjoint trace ring} $A(\Gamma)$ of $\Gamma$ is the integral
closure of the ring
$\Z\left[\{\tr \Ad(\gamma) \sep \gamma \in \Gamma\}\right]$.  If
$k = K(\Gamma)$ is a number field, we simply have (see
\cite{Davis}):
\[
A(\Gamma) = \calO_k\left[\{\tr \Ad(\gamma) \sep \gamma \in
  \Gamma\}\right].
\]

In his paper, Vinberg defines the minimal field (resp. ring) of definition of $\Ad \Gamma$, and 
shows that it coincides with $K(\Gamma)$ (resp. $A(\Gamma)$).
Yet we will not use this property in the rest of the article.  
The adjoint trace field is an invariant of the commensurability class of
$\Gamma$, and the same is true for the adjoint trace ring in case
the adjoint trace field is a number field \cite[Th. 3 and Cor. of
Th. 1]{Vinberg}.

Let us assume that $\G$ is defined over $\R$.
Suppose further that $\G(\R) \cong \PO(n,1)$ as Lie groups (with $n \geq 4$) and that
$\Gamma \subset \G(\R)$.  
For example, one can take $\G= \PO_f$ for $f$ a signature $(n,1)$ quadratic form.  
In that case, the algebraic adjoint representation and the one coming from the Lie group structure coincide on $\G(\R)$.  
Thus the adjoint trace field (resp. ring) of $\Gamma$ does not depend on $\G$, and it makes sense
to speak of the adjoint trace field $K(M)$ (resp. of the adjoint trace ring $A(M)$) of a hyperbolic manifold $M$.

\begin{remark} \label{rem:n=3-1} For $n=3$ the group $\Isom(\H^3) \cong \PSL_2(\C)$ has a structure
  of a complex Lie group.
  The adjoint trace field --- for the complex adjoint representation --- of a Zariski-dense
  $\Gamma \subset \PSL_2(\C)$ coincides with the invariant trace
  field in the sense of Maclachlan and Reid \cite[Def. 3.3.6]{MR}.
  If one uses an algebraic group $\G$ such that $\G(\R) \cong \Isom(\H^3)$, one gets a different adjoint trace field.  
  For $n\geq 4$, this confusion cannot happen, as $\PO(n,1)$ then does not possess a complex algebraic structure.
\end{remark}

By Borel's Density Theorem, any lattice $\Gamma \subset \G(\R)$ is Zariski-dense.
Furthermore, it follows from local rigidity that the adjoint
trace field of $\Gamma$ is a number field (even if $\Gamma$ is nonarithmetic, 
see \cite{Vinberg-Lie-Groups}).

The following easy lemma is useful to compute the adjoint trace ring.
\begin{lemma} \label{lem:trAd}
  Let $f$ be a quadratic form of rank $r$, and let $\OO_f(\C) \subset \GL_r(\C)$ denote the algebraic group of complex $f$-orthogonal matrices.
  Then for any $g \in \OO_f(\C)$,
  \[
  \tr \Ad(g) = \frac{(\tr g)^2 - \tr(g^2)}{2},
  \]  
  where $\Ad$ denotes the adjoint representation.
\end{lemma}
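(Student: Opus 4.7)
The plan is to identify the Lie algebra $\mathfrak{o}_f$, as an $\OO_f$-representation via $\Ad$, with the exterior square $\Lambda^2 V$ of the standard representation $V = \C^r$, and then invoke the classical identity for traces on $\Lambda^2$.

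For the identification, let $F$ be the Gram matrix of $f$. By definition $\mathfrak{o}_f = \{X \in \Mat_r(\C) : X^T F + FX = 0\}$, so $X \mapsto FX$ is a linear isomorphism from $\mathfrak{o}_f$ onto the space of antisymmetric $r\times r$ matrices, which I identify with $\Lambda^2 V^*$. For $g \in \OO_f(\C)$, the defining relation $g^T F g = F$ gives $F \cdot \Ad(g)(X) = F g X g^{-1} = (g^{-1})^T (FX) g^{-1}$, and this is precisely the standard action of $g$ on antisymmetric bilinear forms. Hence $\Ad$ on $\mathfrak{o}_f$ is isomorphic, as an $\OO_f$-representation, to $\Lambda^2 V^*$. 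Since $f$ induces an $\OO_f$-equivariant isomorphism $V \cong V^*$, equivalently $\Ad \cong \Lambda^2 V$.

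The final step is to apply the classical formula: for any $h \in \GL(V)$ with eigenvalues $\lambda_1, \dots, \lambda_r$,
\[
\tr(\Lambda^2 h) = \sum_{i<j} \lambda_i \lambda_j = \frac{(\tr h)^2 - \tr(h^2)}{2}.
\]
Specializing to $h = g$, the lemma follows.

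The one subtle point, and the only place where the special nature of $\OO_f$ (as opposed to arbitrary $\GL$) enters, is the $\OO_f$-equivariance of the identification $\mathfrak{o}_f \cong \Lambda^2 V$; this is a direct consequence of $g^T F g = F$ and should take only a few lines. Everything else reduces to elementary exterior-algebra, and no case distinction (diagonalizable versus general) is needed since both sides of the formula are polynomial in the entries of $g$.
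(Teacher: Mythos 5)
Your proof is correct, and it takes a cleaner route than the paper's. The paper conjugates to the standard orthogonal group, picks an orthonormal basis of eigenvectors $e_1,\dots,e_r$ of $g$ with eigenvalues $\mu_i$, and checks by hand that the skew-symmetric matrices $X_{i,j}=e_ie_j^t-e_je_i^t$ form an eigenbasis of $\Ad(g)$ with eigenvalues $\mu_i\mu_j$, so that $\tr\Ad(g)=\sum_{i<j}\mu_i\mu_j$. Your identification of the adjoint representation with $\bigwedge^{2}V$ via $X\mapsto FX$ is the coordinate-free version of exactly this fact, followed by the standard trace identity for the exterior square. The two arguments therefore rest on the same representation-theoretic content, but yours buys two things: it avoids choosing a Gram-orthonormal eigenbasis altogether, and --- as you note --- it is valid verbatim for non-semisimple $g$, since both sides are polynomial in the entries. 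This is a genuine improvement in rigor: a complex $f$-orthogonal matrix need not be diagonalizable (unipotent elements of $\SO_r(\C)$ for $r\geq 3$ are not), so the paper's opening claim that $g$ admits an orthonormal eigenbasis is false in general and must be repaired by a density or Zariski-closure argument; your formulation sidesteps this entirely. The only detail worth writing out fully is the equivariance computation $Fg Xg^{-1}=(g^{-1})^{T}(FX)g^{-1}$ from $g^{T}Fg=F$, which you have correctly sketched.
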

\begin{proof}
  Since all orthogonal groups are conjugate in $\GL_r(\C)$ and since the formula is invariant under conjugation, 
  it is enough to prove the lemma for the standard orthogonal group $\OO_r(\C)= \OO_f(\C)$ with $f = x_1^2 + \cdots + x_r^2$.
  
  Since $g$ is orthogonal, there exists an orthonormal basis $\{e_1, \dots, e_r\}$ of $\C^r$ which
  consists of eigenvectors of $g$.
  Let $\mu_i$ be the eigenvalue corresponding to $e_i$.
  The Lie algebra $\frakg$ is the subspace of $\Mat_{r}(\C)$ of skew-symmetric matrices.
  Therefore, the set
  \[
  \{ X_{i,j} = e_i e_j^t - e_j e_i^t  \sep 1 \leq i < j \leq r\}
  \]
  forms a basis of $\frakg$.
  
  For subgroups of $\GL_r(\C)$ the adjoint action coincides with
  conjugation, hence we have $\Ad(g)X_{i,j} = g X_{i,j} g^t = \mu_i \mu_j X_{i,j}$.
  Thus the $X_{i,j}$ are all eigenvectors of $\Ad(g)$, with corresponding eigenvalues $\mu_i \mu_j$.
  It follows that 
  \begin{equation}\label{eq:trAd-eigenvalues}
    \tr \Ad(g) = \sum_{1 \leq i < j \leq r} \mu_i \mu_j
  \end{equation}

  and one readily verifies that this equals $\frac{1}{2} ((\tr g)^2 - \tr(g^2))$.
\end{proof}

\begin{remark}
  Let $M$ be a doubly-cut glueing, with $\rho_1,\rho_2$ the two reflections
  corresponding to the cut hyperplanes, and assume that $\tr \Ad(\rho_1\rho_2) \notin \calO_k$.
  Then it follows already from \eqref{eq:trAd-eigenvalues} and Remark \ref{rem:trAd} that $M$ is
  nonarithmetic.
  Another way to see this is to observe that $\tr \Ad(\rho_1\rho_2)$ lies in the adjoint trace ring
  $A(M)$ of $M$ which is an invariant of the commensurability class.
  Since $A(M) \not \subset \calO_k$ it follows that $M$ cannot be commensurable to $\OO_f'(\calO_k)$,
  i.e., that $M$ is nonarithmetic.
  The advantage of this point of view is that it can be used to decide if two doubly-cut glueings
  are non-commensurable, as shown in the next section.
\end{remark}
\subsection{Examples over $\Q$}\label{sec:examples}
We will apply the results of the previous section to the quadratic form
$f = -x_0^2 + x_1^2 + \cdots + x_n^2$ defined over $\Q$.
In this section, norms, scalar products and orthogonal complements are to be understood with respect
to the quadratic form $f$.

Fix the hyperplane $R_1 = \{x_1 = 0 \} = v^\perp$ where $v = (0,1,0, \dots, 0)$.
Let $w \in \Z^{n+1}$ be such that 
\[
w_1^2 \geq \la w, w \ra > 0.
\]
Then the hyperplanes $R_1$ and $R_2= w^\perp$ do not intersect. 
Indeed we have $|w_1| = |\la v, w\ra | \geq \|v\| \|w\| = \sqrt{\la w, w \ra }$, and by \cite[Th 3.2.7]{Ratcliffe}
it follows that $R_1 \cap R_2 = \emptyset$.

Let $M_w$ be a doubly-cut glueing with cut hyperplanes $R_1$ and $R_2$.
Write $M_w = \Gamma_w \bs \H_f$ with $\Gamma_w \subset \OO_f'(\R)$.
Let $\rho_1, \rho_2$ denote the reflections in $R_1, R_2$ respectively.
\begin{lemma} \label{lem:Ad-formula}
  $\tr \Ad (\rho_1 \rho_2) \in  4(n-1)\frac{w_1^2}{\la w, w \ra} + \Z$.
\end{lemma}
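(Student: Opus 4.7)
The plan is to apply Lemma \ref{lem:trAd} to $g = \rho_1\rho_2$, reducing the claim to a computation of $\tr(g)$ and $\tr(g^2)$ as rational functions of $\la v,v\ra = 1$, $\la v,w\ra = w_1$, and $\la w,w\ra$. The key structural observation is that each reflection $\rho_i$ fixes $u_i^\perp$ pointwise (with $u_1 = v$, $u_2 = w$), so $g$ acts as the identity on $U^\perp$, where $U = \mathrm{span}(v, w)$. Since $\dim U^\perp = n-1$ and $g$ preserves the flag $U^\perp \subset \R^{n+1}$, in a compatible basis the matrix of $g$ is block upper triangular with an $(n-1) \times (n-1)$ identity block. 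This yields $\tr(g) = (n-1) + \tr(\bar g)$ and $\tr(g^2) = (n-1) + \tr(\bar g^2)$, where $\bar g$ is the induced $2 \times 2$ action on $\R^{n+1}/U^\perp$. Moreover, $\det(\rho_i) = -1$ gives $\det(\bar g) = \det(g) = 1$, so the standard $2 \times 2$ identity $\tr(\bar g^2) = \tr(\bar g)^2 - 2\det(\bar g)$ reduces to $\tr(\bar g^2) = \tr(\bar g)^2 - 2$.

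Next I would compute $\tr(g)$ directly, writing each reflection as $\rho_i = I - \frac{2}{\la u_i, u_i\ra}\, u_i u_i^t J$ with $J = \mathrm{diag}(-1, 1, \dots, 1)$ the matrix of $f$. Expanding $g = \rho_1\rho_2$ and using the cyclic property of trace together with the identity $u_i^t J u_j = \la u_i, u_j\ra$ gives
\[
\tr(g) \;=\; (n+1) - 2 - 2 + 4\,\frac{\la v, w\ra^2}{\la v, v\ra \la w, w\ra} \;=\; n - 3 + 4\,\frac{w_1^2}{\la w, w\ra}.
\]
Setting $s := \tr(\bar g) = -2 + 4 w_1^2/\la w, w\ra$ and substituting into Lemma \ref{lem:trAd},
\[
\tr \Ad(g) \;=\; \frac{(n-1+s)^2 - (n-1) - (s^2 - 2)}{2} \;=\; (n-1)\,s + \frac{(n-1)(n-2) + 2}{2},
\]
so $\tr \Ad(g) = 4(n-1)\frac{w_1^2}{\la w, w\ra} + \frac{(n-1)(n-6) + 2}{2}$. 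Since $(n-1)(n-6)$ is always even, the constant term lies in $\Z$, which is the desired conclusion.

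The only mild subtlety is the degenerate case $w_1^2 = \la w, w\ra$ (parallel hyperplanes meeting at infinity), in which $U \cap U^\perp \neq 0$ and $U$ is not a complement of $U^\perp$. The argument above is insensitive to this, however, since it uses only the $g$-invariance of the flag $U^\perp \subset \R^{n+1}$ and the multiplicativity of trace and determinant on block upper triangular matrices, not a direct-sum decomposition. There is no real obstacle in the proof; the main care needed is to organize the trace computation through this flag rather than through an eigenvalue decomposition that would fail in the parabolic case.
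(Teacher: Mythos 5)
Your proof is correct, and your computation of $\tr g$ agrees with the paper's. The difference lies in how $\tr(g^2)$ is obtained. The paper stays entirely inside matrix algebra: it writes $\rho_2 = I - \frac{2}{\la w,w\ra}ww^tJ$, squares $g = \rho_1\rho_2$ explicitly using the rank-one identity $(\rho_1 ww^tJ)^2 = \la w,\rho_1 w\ra\,\rho_1 ww^tJ$, and reads off $\tr(g^2) = (n-1) - 2 + 4\la w,\rho_1 w\ra^2/\la w,w\ra^2$, which matches your $(n-1) + \tr(\bar g)^2 - 2$ after the substitution $\la w,\rho_1 w\ra = \la w,w\ra - 2w_1^2$. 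You instead exploit the geometry: $g$ fixes the $(n-1)$-dimensional subspace $U^\perp$ pointwise, has determinant $1$, and so Cayley--Hamilton on the two-dimensional quotient gives $\tr(\bar g^2) = \tr(\bar g)^2 - 2$ with no squaring of matrices. Your route is more conceptual and explains structurally why only $\tr g$ needs to be computed; your remark that the flag argument (as opposed to a direct-sum splitting) survives the degenerate case $w_1^2 = \la w,w\ra$ is a worthwhile point of care that the paper's purely algebraic computation sidesteps automatically. Both yield the same constant term modulo $\Z$ (the paper's intermediate constant $\frac{(n-1)^2-(n-1)-2}{2}$ has a harmless sign slip in the $\pm 2$, irrelevant modulo $\Z$), so the two proofs are genuinely interchangeable.
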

\begin{proof}
  Let $g = \rho_1 \rho_2$.
  By Lemma \ref{lem:trAd} it suffices to compute $\frac{1}{2}((\tr g)^2- \tr(g^2))$.
  For $x \in \H_f$ we have $\rho_2(x) = x -2 \frac{\la x,w \ra}{\la w,w\ra} w$ and hence
  \[
    \rho_2 = I - \frac{2}{\la w, w \ra} w w^t J, \quad \text{where} \quad  J = (\la e_i, e_j\ra)_{i,j}.
  %   I - \frac{2}{\la w,w \ra} 
  % \left(\begin{array}{@{}c|c|c|c@{}}
  %         &&&\\
  %   -w_0 w & w_1 w & \cdots & w_n w \\
  %         &&&\\
  % \end{array}\right) = 
  \]
  Now since 
  \[
  (\rho_1 w w^t J)^2 = \rho_1 w \underbrace{w^t J\rho_1 w}_{= \la w, \rho_1 w \ra} w^t J = \la w, \rho_1 w \ra \rho_1 w w^t J
  \]
  and since $g = \rho_1 \rho_2 = \rho_1 - \frac{2}{\la w,w\ra} \rho_1 ww^t J$, we have 
  \[
  g^2 = \rho_2 - \frac{2}{\la w,w \ra} \rho_1 w w^t J \rho_1 + 4 \frac{\la w, \rho_1 w \ra}{\la w, w\ra^2} \rho_1 w w^t J.
  \]
  Computing the trace then gives 
  \[
  \tr (g^2) = (n-1) -2 + 4 \frac{\la w, \rho_1 w \ra^2}{\la w, w\ra^2}.
  \]
  On the other hand $\tr g = (n-1) - 2 \frac{\la w, \rho_1 w \ra}{\la w, w \ra}$ whence 
  \[
  \frac{(\tr g)^2 - \tr (g^2)}{2} = \frac{(n-1)^2 - (n-1) -2}{2} -2(n-1)\frac{\la w, \rho_1 w \ra}{\la w, w \ra}.
  \]
  Since the numerator of the first fraction on the right hand side is even, and since $\la w, \rho_1 w \ra = \la w, w \ra -2 w_1^2$,
  this expression is indeed in $4(n-1)\frac{w_1^2}{\la w, w \ra} + \Z$.
\end{proof}
\begin{corollary} \label{cor:trace-ring}
  The adjoint trace ring $A(M_w)$ of the manifold $M_w = \Gamma_w \bs \H_f$ satisfies
  \[
  \Z\left[\frac{4(n-1) w_{1}^2}{\la w, w \ra}\right] \;\subset\; A(M_w)\; \subset\; \Z\left[\frac{2}{\la w, w\ra}\right].
  \]
  Furthermore, the inclusions are equalities if and only if $\Gamma_w \subset \OO_f'(A(M_w))$.
\end{corollary}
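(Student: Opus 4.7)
The plan is to handle the two inclusions in turn, and then characterize when they become equalities. For the \emph{lower bound}, Lemma \ref{lem:generating-set} gives $\rho_1\rho_2 \in \Gamma_w$, so $\tr\Ad(\rho_1\rho_2) \in A(M_w)$; Lemma \ref{lem:Ad-formula} places this trace in $\frac{4(n-1)w_1^2}{\la w,w\ra} + \Z$, and since $\Z \subset A(M_w)$, the element $\frac{4(n-1)w_1^2}{\la w,w\ra}$ itself lies in $A(M_w)$. This yields the first inclusion.

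For the \emph{upper bound}, Lemma \ref{lem:generating-set} further gives $\Gamma_w \subset \la\OO_f'(\Z), \rho_1, \rho_2\ra$, and each generator has matrix entries in $R := \Z\bigl[\frac{2}{\la w,w\ra}\bigr]$: elements of $\OO_f'(\Z)$ and the coordinate reflection $\rho_1$ are integral, and $\rho_2 = I - \frac{2}{\la w,w\ra} ww^t J$ has entries of the form $\delta_{ij} - \frac{2\epsilon_j w_iw_j}{\la w,w\ra}$. Thus every $\gamma \in \Gamma_w$ lies in $\OO_f'(R)$. The key point — drawn from equation \eqref{eq:trAd-eigenvalues} in the proof of Lemma \ref{lem:trAd} — is that $\tr\Ad(\gamma) = \sum_{i<j}\mu_i\mu_j$ is (up to sign) a coefficient of the characteristic polynomial of $\gamma$, and therefore lies in $R$ with no division by $2$ required. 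Since $R$ is a localization of $\Z$ and hence integrally closed in $\Q$, the integral closure $A(M_w)$ of $\Z[\{\tr\Ad(\gamma)\}]$ is contained in $R$.

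For the \emph{equivalence}, the backward direction is already contained in the upper bound argument: if both inclusions are equalities then $A(M_w) = R$ contains all matrix entries of every $\gamma \in \Gamma_w$, so $\Gamma_w \subset \OO_f'(A(M_w))$. Conversely, assume $\Gamma_w \subset \OO_f'(A(M_w))$. The entries $\eta_i\delta_{ij} - \frac{2\eta_i\epsilon_j w_iw_j}{\la w,w\ra}$ of $\rho_1\rho_2 \in \Gamma_w$ then lie in $A(M_w)$, forcing $\frac{2w_iw_j}{\la w,w\ra} \in A(M_w)$ for all $i,j$. Normalizing $w$ to be primitive so that $\gcd(w_0,\dots,w_n) = 1$ (which does not change $R_2 = w^\perp$), the integers $w_iw_j$ span $\Z$, and an integer linear combination yields $\frac{2}{\la w,w\ra} \in A(M_w)$. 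Combined with the upper bound this forces $A(M_w) = R$, collapsing the chain of inclusions into equalities.

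The main obstacle is controlling the denominator in the upper bound: applying Lemma \ref{lem:trAd}'s formula $\tr\Ad(g) = \tfrac{(\tr g)^2 - \tr(g^2)}{2}$ naively would place $\tr\Ad(\gamma)$ only in $\tfrac12 R$, but viewing $\tr\Ad$ as a coefficient of the characteristic polynomial sidesteps this. A subtler point in the forward direction of the equivalence is that the lower inclusion turns into an equality only indirectly, as a consequence of the upper bound collapsing.
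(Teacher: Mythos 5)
Your treatment of the two inclusions is correct and follows the same route as the paper: the lower bound comes from $\rho_1\rho_2\in\Gamma_w$ together with Lemma \ref{lem:Ad-formula}, and the upper bound from the generating set of Lemma \ref{lem:generating-set} landing in $\OO_f'\bigl(\Z[2/\la w,w\ra]\bigr)$. Your observation that $\tr\Ad(\gamma)=\sum_{i<j}\mu_i\mu_j$ is a characteristic-polynomial coefficient, so that no division by $2$ is needed, addresses a point the paper's one-line proof glosses over, and is welcome. Note also that the paper's proof does not address the ``furthermore'' clause at all, so your attempt there goes beyond it.

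That attempt, however, has a genuine gap in the forward direction. From $\Gamma_w\subset\OO_f'(A(M_w))$ you correctly extract (for primitive $w$) that $2/\la w,w\ra\in A(M_w)$, hence $A(M_w)=\Z[2/\la w,w\ra]$; this makes the \emph{right} inclusion an equality. But it does not make the \emph{left} inclusion an equality: that would additionally require $\Z[4(n-1)w_1^2/\la w,w\ra]=\Z[2/\la w,w\ra]$, an arithmetic condition on $n$, $w_1$ and $\la w,w\ra$ that can fail. For instance, with $n=4$ and $w=(1,2,0,\dots,0)$ one has $\la w,w\ra=3$ and $4(n-1)w_1^2/\la w,w\ra=16$, so the left-hand ring is $\Z$ while the right-hand ring is $\Z[1/3]$, and no collapse of the chain is possible. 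Your closing remark that the lower inclusion becomes an equality ``as a consequence of the upper bound collapsing'' is precisely the inference that does not follow. What your argument actually establishes is the cleaner equivalence ``$\Gamma_w\subset\OO_f'(A(M_w))$ iff $A(M_w)=\Z[2/\la w,w\ra]$''; the corollary's literal phrasing is recovered only when the two outer rings coincide, as they are arranged to do in the proof of Theorem \ref{th:Z[1/d]}. A smaller caveat: replacing $w$ by a primitive multiple leaves $R_2$ and $\rho_2$ unchanged but rescales $\la w,w\ra$ by a square, hence changes both rings appearing in the statement, so that normalization is not free and should be stated as a hypothesis rather than performed mid-proof.
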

\begin{proof}
  The left inclusion is clear from the lemma, and the right inclusion follows from Lemma
  \ref{lem:generating-set} since $\OO_f'(\Z) \cup \{ \rho_1,\rho_2 \} \subset
  \OO_f'(\Z[2/ \la w, w \ra])$.
\end{proof}

We now dive into the proof Theorem \ref{th:Z[1/d]}.
\begin{proof}[Proof of Theorem \ref{th:Z[1/d]}]
  Write $d = p_1 \cdots p_r$, where the $p_i$ are distinct prime numbers.
  Assume the $p_i$'s are ordered such that $4(n-1) = a \cdot p_1^{e_1} \cdots p_s^{e_s}$ for some
  $s \leq r$, with $(a,d) = 1$.
  Let $b = p_1^{e_1 + 1} \cdots p_s^{e_s + 1} \cdot p_{s+1} \cdots p_r$.
  Using Bézout's Theorem and elementary properties of subrings of $\Q$, it is easy to see that
  \[
    \Z[{4(n-1)}/{b}] \quad =\quad  \Z[1/d] \quad =\quad  \Z[{2}/{b}].
  \]
  
  Now pick $w_1$ such that $w_1^2 > b$ and $(w_1,b) = 1$ and choose $w_0, w_2, \cdots, w_n$ such that $-w_0^2 + w_2^2 + \cdots + w_n^2 = (b - w_1^2)$.
  The latter is possible since the quadratic form $-x_0^2 + x_2^2 + \cdots + x_n^2$ represents any arbitrary integer
  ($-x_0^2 + x_2^2$ represents every odd integer, and we have at least one more variable to represent $1$).
  If $w = (w_0, w_1, \dots, w_n)$, we have $\la w, w \ra = b$ whence $w_1^2 > \la w, w \ra > 0$.
  Thus we can construct $M_w = \Gamma_w \bs \H_f$ as in the beginning Section \ref{sec:examples}.
  Since $(w_1^2, b) = 1$, we have 
  \[
  \Z\left[\frac{4(n-1) w_{1}^2}{\la w, w \ra}\right] =  \Z[{4(n-1)}/{b}] = \Z[{2}/{b}] = \Z\left[\frac{2}{\la w, w\ra}\right].
  \]
  Therefore the theorem follows from Corollary \ref{cor:trace-ring}, with $\Gamma_d = \Gamma_w$.
\end{proof}

\begin{remark}\label{rem:nonuniform}
  Since the quadratic form $f$ restricted to $R_1$ is simply
  $-x_0^2 + x_2^2 + \cdots + x_n^2$, it follows (since this form is
  isotropic) that $N_1$ and thus also $M_w$ are non-compact for any
  (admissible) $w$.
\end{remark}
\begin{remark}
  Theorem \ref{th:Z[1/d]} still holds for $n =3$, but the notion of adjoint trace ring/field in the statement differs from its usual meaning
  if the lattices are considered in $\PSL_2(\C)$ (see Remark \ref{rem:n=3-1}).  
  Thus Theorem \ref{th:Z[1/d]} is stated for $n \geq 4$ to avoid confusion.
\end{remark}

% \begin{remark}
%   The proof of the theorem still holds for $n=2$ and $n=3$.
%   However in that case the prefered representation of the isometry groups are $\PSL_2(\R)$ and $\PSL_2(\C)$ respectively.
%   However it does not follow that 
% \end{remark}

% Since these examples are constructed from explicit choices of hyperplanes $R_1, R_2$, it is easy to give a (crude) estimate on their volume.
% This is the topic of the next section.

\section{Volume bound} \label{sec:volumes}

A doubly-cut glueing such as the $M_w$ from the previous section depends on the choice of finite index subgroup $\Lambda \subset \OO_f'(\Z)$.
Different choices obviously lead to commensurable manifolds; define $V_w$ to be the minimal volume of a manifold in the commensurability class of 
$M_w$.
The goal of this section is to explicitly construct such a $\Lambda$ (satisfying the requirements given in the beginning of Section 
\ref{sec:nonarith}) and give an upper-bound on $V_w$.
The arguments of this section are inspired by the proof of \cite[Lemma 10]{MV}.

We use the notation of the previous section. 
The subgroup $\Lambda \subset \OO_f'(\Z)$ must be torsion-free and satisfy the following: for all $\lambda \in \Lambda$, 
\begin{enumerate}
\item $\lambda R_1 \cap R_1 = \emptyset$, \label{it:req1}
\item $\lambda R_2 \cap R_2 = \emptyset$,\label{it:req2}
\item $\lambda R_1 \cap R_2 = \emptyset$.\label{it:req3}
\end{enumerate}

We focus on congruence subgroups of the form $\Lambda_m = \{\lambda \in \OO_f'(\Z) \sep \lambda \equiv
\id \mod m\}$ (for $m$ not necessarily prime). 
We want to find $m \geq 2$ such that $\Lambda_m$ fulfills our requirements.

To obtain a torsion-free subgroup, it is enough to take a congruence subgroup $\Lambda_m$ with $m > 2$ (see \cite[Theorem IX.7]{Newman}).
Furthermore, it follows from \cite[§2.8]{GPS} that any congruence subgroup satisfies \ref{it:req1}.

For \ref{it:req2}, we need to ensure that for any $\lambda \in \Lambda_m$ we have
\begin{equation} \label{eq:ineq-1}
  |\la \lambda w, w \ra | \geq \la w, w \ra.
\end{equation}
Now if we choose $m \geq 2 \la w,w \ra$, the fact that 
\[
  \la \lambda w, w \ra = \la w, w \ra \mod m
\]
implies inequality \eqref{eq:ineq-1}.

Finally for \ref{it:req3} the situation is similar: we just need to ensure that
\begin{equation} \label{eq:ineq-2}
  |\la \lambda v, w \ra | \geq ||w||.
\end{equation}
If $m \geq 2 w_1^2$, the fact that $w_1^2 \geq \la w, w \ra$ and the equation
\[
  \la \lambda v, w \ra ^2 = \la v, w \ra ^2 = w_1^2 \mod m
\]
imply that the inequalities \eqref{eq:ineq-1} and \eqref{eq:ineq-2} are fulfilled.

To sum up, we define $\Lambda = \Lambda_{m}$ where $m = \max(3, 2w_1^2)$.
This congruence subgroup is then such that $L = \Lambda \bs \H_f$ is a hyperbolic manifold in
which $R_1$ and $R_2$ project to disjoint hypersurfaces, as desired.
Now a doubly-cut glueing $M$ constructed from $\Lambda$ is obtained as the double of a piece of $L$.
Thus the volume of $M$ cannot exceed twice the volume of $L$, and we have proven:
\begin{proposition} \label{prop:vol-M_w}
  The minimal volume $V_w$ of a manifold in the commensurability class of $M_w$ satisfies
  \[
  V_w \leq 2\cdot |\OO_f(\Z/m\Z)| \cdot \covol \OO_f(\Z)
  \]
  where $m=\max(3, 2w_1^2)$.
\end{proposition}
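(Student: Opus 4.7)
The plan is to produce an explicit $\Lambda$ meeting the three requirements (i)--(iii) from Section~\ref{sec:nonarith}, compute (or upper-bound) its index in $\OO_f'(\Z)$, and then observe that a doubly-cut glueing $M_w$ is built by cutting $L = \Lambda \bs \H_f$ along the disjoint hypersurfaces $N_1, N_2$ and doubling one of its connected components. Since doubling at most doubles the volume and the minimal-volume member $V_w$ of the commensurability class is no larger than $\vol(M_w)$, this will reduce the proposition to bounding $\vol(L) = [\OO_f'(\Z):\Lambda]\cdot\covol\OO_f(\Z)$.

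For the choice of $\Lambda$, I would take the principal congruence subgroup $\Lambda_m = \{\lambda \in \OO_f'(\Z) \sep \lambda \equiv \id \mod m\}$ with $m = \max(3, 2w_1^2)$; this is precisely what the three paragraphs preceding the proposition establish. Torsion-freeness comes from $m > 2$ via Newman's theorem; condition (i) is automatic for congruence subgroups by the argument cited from~\cite[§2.8]{GPS}; and (ii), (iii) follow from the fact that $\la \lambda w, w\ra \equiv \la w,w\ra \pmod m$ and $\la \lambda v, w\ra^2 \equiv w_1^2 \pmod m$, combined with the hypothesis $w_1^2 \geq \la w, w\ra$ and the choice $m \geq 2w_1^2$. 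So this $\Lambda_m$ satisfies all the hypotheses needed to form the doubly-cut glueing.

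To finish, I would use the reduction-mod-$m$ homomorphism $\OO_f'(\Z) \to \OO_f(\Z/m\Z)$, whose kernel is exactly $\Lambda_m$. This gives an injective map $\OO_f'(\Z)/\Lambda_m \hookrightarrow \OO_f(\Z/m\Z)$, hence
\[
[\OO_f'(\Z):\Lambda_m] \;\leq\; |\OO_f(\Z/m\Z)|.
\]
Therefore $\vol(L) \leq |\OO_f(\Z/m\Z)| \cdot \covol\OO_f(\Z)$, and since $M_w$ is obtained by doubling a subregion of $L$ one has $V_w \leq \vol(M_w) \leq 2\vol(L)$, yielding the claimed inequality.

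The main subtlety is really just the bookkeeping in the previous step: one must be careful that $\covol\OO_f(\Z)$ refers to the full arithmetic lattice (as opposed to some finite-index subgroup in $\OO_f'(\Z)$), and that the reduction map lands in $\OO_f(\Z/m\Z)$ rather than a smaller group; apart from this, the argument is an assembly of facts already laid out in the text. No strong-approximation surjectivity is needed, since one only wants an upper bound, which is why an inequality suffices in the final statement.
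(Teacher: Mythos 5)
Your proof is correct and follows essentially the same route as the paper: the same congruence subgroup $\Lambda_m$ with $m=\max(3,2w_1^2)$, the same verifications of torsion-freeness and the disjointness conditions, the index bound via reduction mod $m$, and the factor of $2$ from doubling. The only content the paper leaves implicit that you spell out is the injection $\OO_f'(\Z)/\Lambda_m \hookrightarrow \OO_f(\Z/m\Z)$, which is a welcome clarification rather than a deviation.
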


An example of relatively small volume is the following.
\begin{corollary} \label{prop:volumes}
  For any $n > 4$, $n \not \equiv 1 \pmod 3$ there exists an $n$-dimensional nonarithmetic doubly-cut glueing $M$ with
  \[
  \vol(M) \leq 2\cdot |\OO_f(\Z/8\Z)| \cdot \covol \OO_f(\Z).
  \]
\end{corollary}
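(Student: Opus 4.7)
The plan is to apply Proposition \ref{prop:vol-M_w} and exhibit a specific $w$ with $m = \max(3, 2w_1^2) = 8$, which forces $w_1 = \pm 2$. The nontrivial content is then to verify that for this choice of $w_1$ we can arrange $M_w$ to be nonarithmetic. By Lemma \ref{lem:Ad-formula} together with the remark following Lemma \ref{lem:trAd} (using $k = \Q$, so $\calO_k = \Z$), it suffices to exhibit $w$ with $w_1 = 2$ and
\[
\frac{16(n-1)}{\la w, w \ra} \notin \Z.
\]

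The admissibility constraint $w_1^2 \geq \la w, w \ra > 0$ with $w_1 = 2$ restricts $\la w, w \ra$ to $\{1,2,3,4\}$. For $\la w, w\ra \in \{1,2,4\}$, the quotient $16(n-1)/\la w, w\ra$ is always an integer, so the only usable value is $\la w, w\ra = 3$, and this gives a noninteger precisely when $3 \nmid (n-1)$, that is, when $n \not\equiv 1 \pmod 3$. To realize $\la w, w\ra = 3$ with $w_1 = 2$, I need $-w_0^2 + w_2^2 + \cdots + w_n^2 = -1$; the simplest choice is $w = (1, 2, 0, \ldots, 0)$, which lies in $\Z^{n+1}$ and satisfies $w_1^2 = 4 > 3 = \la w, w \ra > 0$. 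By the inequality argument preceding Lemma \ref{lem:Ad-formula}, the hyperplanes $R_1 = e_1^\perp$ and $R_2 = w^\perp$ are disjoint, so the doubly-cut glueing $M_w$ is well defined.

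With this $w$, Lemma \ref{lem:Ad-formula} gives $\tr\Ad(\rho_1\rho_2) \in \tfrac{16(n-1)}{3} + \Z$, which fails to lie in $\Z$ under the hypothesis $n \not\equiv 1 \pmod 3$, so $M_w$ is nonarithmetic by the remark after Lemma \ref{lem:trAd}. Plugging $m = \max(3, 2\cdot 4) = 8$ into Proposition \ref{prop:vol-M_w} yields the stated bound for a minimal-volume manifold $M$ in the commensurability class of $M_w$. There is no serious obstacle in the argument; the only subtlety is noticing that the admissibility window $\la w, w\ra \in \{1,2,3,4\}$ has a unique value, namely $3$, that can produce a noninteger adjoint trace, and that this value is compatible with $n \not\equiv 1 \pmod 3$ exactly as hypothesized.
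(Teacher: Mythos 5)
Your proof is correct and follows essentially the same route as the paper: the same choice $w = (1,2,0,\dots,0)$ with $w_1^2 = 4 > \la w,w\ra = 3$, the same computation showing $\tr\Ad(\rho_1\rho_2) \in \tfrac{16(n-1)}{3} + \Z \not\subset \Z$ when $n \not\equiv 1 \pmod 3$, and the same application of Proposition \ref{prop:vol-M_w} with $m = 8$. The paper phrases nonarithmeticity via Corollary \ref{cor:trace-ring} (the adjoint trace ring being $\Z[1/3] \neq \Z$) while you invoke the remark after Lemma \ref{lem:trAd} directly, but the paper itself notes these are two views of the same argument; your preliminary discussion of why $\la w,w\ra = 3$ is the only workable value is a nice piece of motivation absent from the paper but not logically needed.
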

\begin{proof}
  We pick $w = (1,2,0 ,\dots,0)$. 
  Observe that $w_1^2 = 4 > \la w,w \ra = 3 > 0$.
  Therefore, we can construct a doubly-cut glueing $M_w$ as before.
  Let $M$ be a manifold of minimal volume in the commensurability class of $M_w$.
  The fact that $M$ is nonarithmetic follows from Corollary \ref{cor:trace-ring} (since $\Z[4(n-1)\cdot 4 /3] = \Z[1/3]$
  if $n \not \equiv 1 \pmod 3$) and the volume bound is a consequence of Proposition \ref{prop:vol-M_w}.
\end{proof}

For exact computations of the volumes we refer to the formulas of \cite{RT} and the references therein.

\section{About generalizations} \label{sec:generalizations}
There are two easy to state generalizations of the doubly-cut glueing construction of nonarithmetic manifolds.
The first one is to increase the number of cut hyperplanes.
Indeed, if one finds $n$ disjoint rational hyperplanes, then it is possible (using again \cite[Lemma 3.1]{BT}) to find an arithmetic lattice 
$\Gamma \subset \PO_f$ such that they project down to disjoint hypersurfaces.
If one chooses the hyperplanes in such a way that their reflections have interesting rational properties, one might get 
better results regarding for example the minimal volume estimate of a manifold with prescribed adjoint trace ring.

The second possible generalization is to give an analog of Theorem \ref{th:Z[1/d]} for number fields.
One way to proceed would be to replace $\Z[1/d]$ with the ring $\calO_S$ of $S$-integers of a
totally real number field $k$, where $S$ is a finite set of non-archimedean places.
For specific examples of $S$ this is feasible (as suggested in the toy example of the next proposition).
However in order to get a general result one has to face the problem that an admissible quadratic
form over $k \neq \Q$ is not isotropic, and thus cannot represent any element of $k$.
Moreover, even when we restrict to quadratic extensions, it is likely that more hyperplanes will be needed to generate $\calO_S$, thereby making the construction
more complicated.

\begin{proposition}
  Let $k = \Q(\sqrt{d})$ for $d$ a square-free positive integer.
  Let $n \geq 5$ and $f$ be the quadratic form $f = -\sqrt{d}x_0^2 + x_1^2 + \cdots + x_n^2$.
  Then there exist nonarithmetic lattices in $\OO_f'(\R)$ with trace ring $A$ satisfying 
  \[
  \calO_k \varsubsetneq  A \subset \calO_k\left[\frac{2}{\sqrt{d}- (a^2 + b)}\right],
  \]
  for any integers $a$ and $b$ such that $a^2 + b \geq \sqrt{d}$ and $0 \leq b < \sqrt{d}$.
\end{proposition}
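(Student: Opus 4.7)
The plan is to adapt the strategy of Theorem~\ref{th:Z[1/d]} to the real quadratic field $k=\Q(\sqrt{d})$. First I would verify that $f$ is admissible over $k$: it has signature $(n,1)$ over $\R$ because $\sqrt{d}>0$, while its Galois conjugate $f^{\sigma}=\sqrt{d}\,x_0^2+x_1^2+\cdots+x_n^2$ is positive definite, so $\OO_f'(\calO_k)$ is an arithmetic lattice of the first type in $\OO_f'(\R)$.

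Next I would mimic the construction of Section~\ref{sec:examples} with coefficients in $\calO_k$. Take $v=(0,1,0,\dots,0)$, so that $R_1=v^{\perp}=\{x_1=0\}$. Given integers $a,b$ as in the hypothesis, Lagrange's four-square theorem (which applies because $n-1\geq 4$) writes $b=c_2^2+\cdots+c_n^2$ with $c_i\in\Z$; set $w=(1,a,c_2,\dots,c_n)\in\calO_k^{n+1}$. A short calculation gives $\la w,w\ra = a^2+b-\sqrt{d}$, which is strictly positive since $a^2+b$ is a rational integer while $\sqrt{d}$ is irrational, and $w_1^2=a^2\geq\la w,w\ra$ because $b\leq\sqrt{d}$. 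The same argument as in Section~\ref{sec:examples} then shows that $R_1$ and $R_2=w^{\perp}$ are disjoint $k$-rational hyperplanes, and the doubly-cut glueing construction of Section~\ref{sec:nonarith} (applied with a suitable finite-index torsion-free subgroup of $\OO_f'(\calO_k)$) produces a manifold $M_w=\Gamma_w\bs\H_f$.

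The upper bound is then immediate from Lemma~\ref{lem:generating-set}: $\Gamma_w\subset\la\OO_f'(\calO_k),\rho_1,\rho_2\ra$, and the entries of $\rho_i$ lie in $\calO_k[2/\la w,w\ra]=\calO_k[2/(\sqrt{d}-(a^2+b))]$. For the lower bound, the algebraic manipulations of Lemma~\ref{lem:Ad-formula} use only $\la v,v\ra=1$, $\la w,v\ra=w_1$, and the matrix formula for $\rho_2$, all of which transfer unchanged to the present setting; plugging in the present values of $\la w,w\ra$ and $\la w,\rho_1 w\ra=\la w,w\ra-2w_1^2$ yields
\[
\tr\Ad(\rho_1\rho_2)\;\in\;4(n-1)\,\frac{a^2}{a^2+b-\sqrt{d}}+\Z.
\]
Rationalising the denominator shows that this non-integer part fails to lie in $\calO_k$ unless $(a^2+b)^2-d$ divides $4(n-1)a^2$ in $\Z$ in a specific way, which is not the case for generic admissible $(a,b)$. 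For such $(a,b)$ one gets $\calO_k\varsubsetneq A(M_w)$, and nonarithmeticity follows from Remark~\ref{rem:trAd}.

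The main obstacle is precisely this strict lower inclusion: one must select $(a,b)$ for which $2/(\sqrt{d}-(a^2+b))$ does not already lie in $\calO_k$, a degeneracy that would collapse the whole chain to $\calO_k$. Once this non-degeneracy is checked via the above divisibility condition, the argument is a direct number-field analog of the proof of Theorem~\ref{th:Z[1/d]}, with Lagrange's four-square theorem playing the role of the elementary fact that $-x_0^2+x_2^2+\cdots+x_n^2$ represents every rational integer.
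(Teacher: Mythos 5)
Your proposal follows essentially the same route as the paper: the same vector $w=(1,a,c_2,\dots,c_n)$ with the $c_i$ supplied by Lagrange's four-square theorem (which is why $n\ge 5$ is needed), the same verification that $\la w,w\ra=a^2+b-\sqrt{d}$ satisfies $0<\la w,w\ra\le w_1^2$, and the same transfer of Lemma \ref{lem:Ad-formula} and Corollary \ref{cor:trace-ring} from $\Z$ to $\calO_k$. The one place you diverge is the final step: the paper simply asserts that $4(n-1)w_1^2/\la w,w\ra\notin\calO_k$, whereas you flag that this needs checking and may fail for some admissible $(a,b)$. Your caution is well placed: if $a^2+b-\sqrt{d}$ is a unit of $\calO_k$ (e.g.\ $d=3$, $a=b=1$, where its norm is $1$), then $2/(\sqrt{d}-(a^2+b))$ already lies in $\calO_k$, the upper bound collapses to $\calO_k$, and the strict lower inclusion cannot hold; so the conclusion really does require excluding such pairs (equivalently, arguing that $\la w,w\ra$ is not a unit), a point that neither your sketch nor the paper settles in full generality. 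Apart from making that restriction explicit, your argument matches the intended proof.
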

\begin{proof}
  We generalize the arguments of Section \ref{sec:examples} to this case.  

  Let $w = (1, a, b_1, b_2, b_3, b_4, 0, \dots, 0)$, where $b_1^2 + b_2^2 + b_3^2 + b_4^2 = b$
  (Four Squares Theorem).
  The conditions on $a$ and $b$ ensure that $w$ fulfills all the requirements of the beginning of
  Section \ref{sec:examples} (taking norm and scalar product with respect to our new $f$).
  This implies that we can define $M_w = \Gamma_w \bs \H_f$ analogously.
  Moreover, Lemma \ref{lem:Ad-formula} still holds for this choice of $w$ and $f$.

  Corollary \ref{cor:trace-ring} applied to $M_w$ also remains correct if $\Z$ is 
  replaced by $\calO_k$ (since the considered arithmetic group is $\OO_f'(\calO_k)$ instead of
  $\OO_f'(\Z)$).
  As $\la w, w \ra = a ^2 + b - \sqrt{d}$, one just has to remark that
  $4(n-1)\frac{w_1^2}{\la w, w \ra} \notin \calO_k$ and thus $\Gamma_w$ is the desired lattice.
\end{proof}

\bibliographystyle{alpha}
\bibliography{bibliography}

\begin{thebibliography}{VGS00}

\bibitem[BP89]{BP}
Armand Borel and Gopal Prasad.
\newblock Finiteness theorems for discrete subgroups of bounded covolume in
  semi-simple groups.
\newblock {\em Inst. Hautes \'Etudes Sci. Publ. Math.}, (69):119--171, 1989.

\bibitem[BT11]{BT}
Mikhail~V. Belolipetsky and Scott~A. Thomson.
\newblock Systoles of hyperbolic manifolds.
\newblock {\em Algebr. Geom. Topol.}, 11(3):1455--1469, 2011.

\bibitem[Dav64]{Davis}
Edward~D. Davis.
\newblock Overrings of commutative rings. {II}. {I}ntegrally closed overrings.
\newblock {\em Trans. Amer. Math. Soc.}, 110:196--212, 1964.

\bibitem[GL14]{GL}
Tsachik Gelander and Arie Levit.
\newblock Counting commensurability classes of hyperbolic manifolds.
\newblock {\em Geom. Funct. Anal.}, 24(5):1431--1447, 2014.

\bibitem[GPS88]{GPS}
M.~Gromov and I.~Piatetski-Shapiro.
\newblock Nonarithmetic groups in {L}obachevsky spaces.
\newblock {\em Inst. Hautes \'Etudes Sci. Publ. Math.}, (66):93--103, 1988.

\bibitem[MR03]{MR}
Colin Maclachlan and Alan~W. Reid.
\newblock {\em The arithmetic of hyperbolic 3-manifolds}, volume 219 of {\em
  Graduate Texts in Mathematics}.
\newblock Springer-Verlag, New York, 2003.

\bibitem[MV00]{MV}
G.~A. Margulis and \`E.~B. Vinberg.
\newblock Some linear groups virtually having a free quotient.
\newblock {\em J. Lie Theory}, 10(1):171--180, 2000.

\bibitem[New72]{Newman}
Morris Newman.
\newblock {\em Integral matrices}.
\newblock Academic Press, New York-London, 1972.
\newblock Pure and Applied Mathematics, Vol. 45.

\bibitem[Rai13]{Raimbault}
Jean Raimbault.
\newblock A note on maximal lattice growth in {${\rm SO}(1,n)$}.
\newblock {\em Int. Math. Res. Not. IMRN}, (16):3722--3731, 2013.

\bibitem[Rat06]{Ratcliffe}
John~G. Ratcliffe.
\newblock {\em Foundations of hyperbolic manifolds}, volume 149 of {\em
  Graduate Texts in Mathematics}.
\newblock Springer, New York, second edition, 2006.

\bibitem[RT97]{RT}
John~G. Ratcliffe and Steven~T. Tschantz.
\newblock Volumes of integral congruence hyperbolic manifolds.
\newblock {\em J. Reine Angew. Math.}, 488:55--78, 1997.

\bibitem[Tho16]{Thomson}
Scott Thomson.
\newblock Quasi-arithmeticity of lattices in {$\rm{PO}(n,1)$}.
\newblock {\em Geom. Dedicata}, 180:85--94, 2016.

\bibitem[VGS00]{Vinberg-Lie-Groups}
E.~B. Vinberg, V.~V. Gorbatsevich, and O.~V. Shvartsman.
\newblock Discrete subgroups of {L}ie groups [ {MR}0968445 (90c:22036)].
\newblock In {\em Lie groups and {L}ie algebras, {II}}, volume~21 of {\em
  Encyclopaedia Math. Sci.}, pages 1--123, 217--223. Springer, Berlin, 2000.

\bibitem[Vin67]{Vinberg-non-arith-1}
\`E.~B. Vinberg.
\newblock Discrete groups generated by reflections in {L}oba\v cevski\u\i \
  spaces.
\newblock {\em Mat. Sb. (N.S.)}, 72 (114):471--488; correction, ibid. 73 (115)
  (1967), 303, 1967.

\bibitem[Vin71]{Vinberg}
{\`E}.~B. Vinberg.
\newblock Rings of definition of dense subgroups of semisimple linear groups.
\newblock {\em Izv. Akad. Nauk SSSR Ser. Mat.}, 35:45--55, 1971.

\end{thebibliography}

\end{document}